\newtheorem{theorem}{Theorem}[section]
\newtheorem{lemma}[theorem]{Lemma}
\newtheorem{proposition}[theorem]{Proposition}
\newtheorem{corollary}[theorem]{Corollary}
\theoremstyle{definition}
\newtheorem{definition}[theorem]{Definition}
\newtheorem{example}[theorem]{Example}
\theoremstyle{remark}
\newtheorem{remark}[theorem]{Remark}
\numberwithin{equation}{section}
\begin{document}
\title[Reduced Archimedean skew polynomial rings and power series rings]{Reduced Archimedean skew polynomial rings and skew power series rings}
\author{Ryszard Mazurek}
\address{Faculty of Computer
Science, Bialystok University of Technology, Wiejska 45A, 15--351
Bia{\l}ystok, Poland} \email{r.mazurek@pb.edu.pl}

\subjclass[2010]{Primary 16S36, 16W60; Secondary 13B25, 13F25} \keywords{right (left) Archimedean ring; reduced ring; skew polynomial ring; skew power series ring}
%\date{19.09.2020}

\begin{abstract}
We characterize skew polynomial rings and skew power series rings that are reduced and right or left Archimedean.
\end{abstract}

\maketitle

\section{Introduction}
We begin by recalling what Archimedean rings are, in which context they appeared, and presenting shortly the contents of this paper. In this paper rings are associative with unity, but not necessarily commutative. For a ring $R$, $R[x]$ denotes the polynomial ring, $R[[x]]$ the power series ring, $U(R)$ the group of units of $R$, $J(R)$ the Jacobson radical of $R$, and if $R$ is a commutative domain, then $Q(R)$ denotes the quotient field of $R$.

Let $D$ be a commutative domain and let $K$ be the quotient field of $D$. It is easy to see that the polynomial rings $D[x]$ and $K[x]$ have the same quotient field, i.e., $Q(D[x]) = Q(K[x])$. However, as observed by Gilmer in \cite{Gilmer}, the quotient fields of the power series rings $D[[x]]$ and $K[[x]]$ need not coincide. For instance, it follows from Gilmer's theorem \cite[Theorem 1]{Gilmer} that if
$$\text{$\bigcap_{n \in \mathbb{N}} a^nD = 0$ for some $a \in D \setminus \{0\}$},$$
then $Q(D[[x]]) \neq Q(K[[x]])$. In \cite{Sheldon} Sheldon generalized Gilmer's result by repla\-cing~$K$ with the fraction ring $D_S$ of $D$ with respect to a multiplicative subset $S$ of~$D$. In \cite[Corollary 3.9]{Sheldon} he proved that $Q(D[[x]]) \neq Q(D_S[[x]])$ holds for every $S \not\subseteq U(D)$ if and only~if
\begin{equation}\label{3001}
\bigcap_{n \in \mathbb{N}} a^nD = 0 \text{ for every } a \in D \setminus U(D).
\end{equation}
In \cite[Definitions 3.6]{Sheldon}, Sheldon coined the name an {\it Archimedean domain} for a commutative domain that satisfies condition (\ref{3001}), motivating the use of the word ``Archi\-me\-dean" by the strong parallel from the theory of Archimedean partially ordered groups. The class of Archimedean domains includes, e.g., ACCP domains, Mori domains, one-dimensional domains and completely integrally closed domains (see \cite{GR}). Archimedean domains were investigated in many papers, e.g. in \cite{AAZ, BD, Coykendall, Dobbs, Dumitrescu, GR1990, GRII, Heinzer}. 

In \cite{Nasr} the concept of a commutative Archimedean domain was extended to noncommutative domains and in \cite{MPP} a more general notion of a right (left) Archi\-me\-de\-an ring was introduced as follows.

\begin{definition} A ring $R$ is said to be {\it right} (resp. {\it left}) {\it Archimedean} if
$$\text{$\bigcap_{n \in \mathbb{N}} Ra^n = 0$ (resp. $\bigcap_{n \in \mathbb{N}} a^nR = 0$) for any $a \in R \setminus U(R)$.}$$
The notions of a right Archimedean ring and a left Archimedean ring are independent, even in the class of domains, as \cite[Example 5.8]{RMArchim} shows.
\end{definition}

Let $R$ be a ring and $\alpha$ an endomorphism of $R$. The \emph{skew polynomial ring} $R[x; \alpha]$ (resp. the \emph{skew power series ring} $R[[x; \alpha]]$) consists of polynomials (resp. power series) over $R$ in the variable $x$ with coefficients written on the left of powers of $x$, with term-wise addition and with multiplication subject to the relation $xa = \alpha (a)x$ for $a \in R$. Elements of the set $R \setminus U(R)$ are called \emph{nonunits} of $R$, and we say that $\alpha$ \emph{preserves nonunits} of $R$ if $\alpha(R \setminus U(R)) \subseteq R \setminus U(R)$.

In \cite{Nasr}, Nasr-Isfahani obtained the following characterizations of skew polynomial rings and skew power series rings that are right or left Archimedean domains. 

\begin{theorem} \label{2721} {\rm (Nasr-Isfahani \cite[Theorems 2.9, 2.11]{Nasr})} 
Let $R$ be a ring and $\alpha$ an endomorphism of $R$. 
\begin{enumerate}
\item[(a)] The following conditions are equivalent:
\begin{enumerate}
\item[(i)] $R[x; \alpha]$ is a right Archimedean domain.
\item[(ii)] $R[[x; \alpha]]$ is a right Archimedean domain.
\item[(iii)] $R$ is a right Archimedean domain, $\alpha$ is injective and $\alpha$ preserves non\-units of $R$.
\end{enumerate}
\item[(b)] The following conditions are equivalent:
\begin{enumerate}
\item[(i)] $R[x; \alpha]$ is a left Archimedean domain.
\item[(ii)] $R[[x; \alpha]]$ is a left Archimedean domain.
\item[(iii)] $R$ is a left Archimedean domain and $\alpha$ is injective.
\end{enumerate}
\end{enumerate}
\end{theorem}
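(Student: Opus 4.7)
My plan is to run the cycle (iii) $\Rightarrow$ (ii) $\Rightarrow$ (i) $\Rightarrow$ (iii) in both parts. First I would record the preliminary fact that if $R$ is a domain and $\alpha$ is injective, then both $R[x;\alpha]$ and $R[[x;\alpha]]$ are domains, with $U(R[x;\alpha]) = U(R)$ and a power series $\sum a_ix^i$ in $R[[x;\alpha]]$ a unit iff $a_0 \in U(R)$; these follow from the standard leading/lowest-term calculation using $x^ir = \alpha^i(r)x^i$.

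For (iii) $\Rightarrow$ (ii) I would fix a nonunit $f = \sum a_ix^i$ of $R[[x;\alpha]]$ (so $a_0 \notin U(R)$), take $h \in \bigcap_n R[[x;\alpha]]f^n$, assume $h \neq 0$ with order $k$, and write $h = g_nf^n$. If $a_0 = 0$, then $\mathrm{ord}(g_nf^n) \geq n$, which is immediately absurd. Otherwise an induction shows $(f^n)_0 = a_0^n$, and chasing the lowest nonzero term of $g_nf^n$ one finds $\mathrm{ord}(g_n) = k$ and $h_k = (g_n)_k \alpha^k(a_0)^n$; hence $h_k \in \bigcap_n R\,\alpha^k(a_0)^n$. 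In part (a) the hypothesis that $\alpha$ preserves nonunits makes $\alpha^k(a_0)$ a nonunit, so right Archimedeanicity of $R$ kills $h_k$. In part (b) the symmetric factorisation $h = f^ng_n$ gives instead $h_k = a_0^n(g_n)_k$, and left Archimedeanicity applied directly to the nonunit $a_0$ finishes the job --- this is the source of the asymmetric hypothesis on $\alpha$.

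The implication (ii) $\Rightarrow$ (i) I expect to be routine. Through the inclusion $R[x;\alpha] \hookrightarrow R[[x;\alpha]]$, $R[x;\alpha]$ is a domain. For a nonunit $f \in R[x;\alpha]$, if $\deg f \geq 1$ then $h = g_nf^n$ nonzero forces $\deg h = \deg g_n + n\deg f \to \infty$, absurd; and if $\deg f = 0$, then $f \in R \setminus U(R)$ is also nonunit in $R[[x;\alpha]]$, and the inclusion $\bigcap_n R[x;\alpha]f^n \subseteq \bigcap_n R[[x;\alpha]]f^n = 0$ does the rest.

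For (i) $\Rightarrow$ (iii), the easy inherited properties are: $R$ is a subdomain; $\alpha$ is injective because $\alpha(a) = 0$ forces $xa = 0$ in the domain $R[x;\alpha]$; and right (resp. left) Archimedeanicity of $R$ follows from $\bigcap_n Ra^n \subseteq \bigcap_n R[x;\alpha]a^n$ (resp. the analogous left-sided inclusion). The main obstacle I anticipate is the part (a)-only step that $\alpha$ preserves nonunits. I would argue by contrapositive: if $a \in R \setminus U(R)$ but $\alpha(a) \in U(R)$, iterate $xa = \alpha(a)x$ to obtain $xa^n = \alpha(a)^nx$, so $x = \alpha(a)^{-n}(xa^n) \in R[x;\alpha]a^n$ for every $n$, placing the nonzero element $x$ in $\bigcap_n R[x;\alpha]a^n$ and contradicting (a)(i). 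The same attempt collapses on the left because $a^n$ is never a right-unit in a domain when $a$ is a nonunit --- which is exactly why the preservation hypothesis is absent from (b)(iii).
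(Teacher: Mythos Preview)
The paper does not supply its own proof of this theorem; it is quoted verbatim as a result of Nasr-Isfahani \cite[Theorems~2.9, 2.11]{Nasr} and then used as a black box in the proofs of Theorems~\ref{1130} and~\ref{1144}. So there is no in-paper argument to compare your proposal against.

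That said, your proof is correct. A couple of small remarks. In (ii)~$\Rightarrow$~(i), your degree-count argument for the case $\deg f\ge 1$ tacitly uses that degree is additive in $R[x;\alpha]$; you have this for free once $R[x;\alpha]$ is known to be a domain, since $a_mx^m\cdot b_nx^n=a_m\alpha^m(b_n)x^{m+n}$ would be a product of nonzero elements and hence nonzero. In (i)~$\Rightarrow$~(iii), your derivation that $\alpha$ preserves nonunits is essentially the same idea the paper uses later (Lemma~\ref{2211}(a) applied to $x=\alpha(a)^{-1}xa$) in the reduced setting.

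It is worth noting that your (iii)~$\Rightarrow$~(ii) argument---tracking the lowest nonzero coefficient $h_k$ of an element in $\bigcap_n R[[x;\alpha]]f^n$ and trapping it in $\bigcap_n R\,\alpha^k(a_0)^n$---is exactly the template the paper follows in proving the harder generalizations Theorems~\ref{1133} and~\ref{1131}, where the domain hypothesis is weakened to rigidity of $\alpha$. There the single ``lowest nonzero term'' is replaced by an induction on the coefficient index together with a cancellation lemma (Lemma~\ref{9000}) to kill cross terms; in the domain case your direct order argument does all of that work at once. Your explanation of the left/right asymmetry (why ``$\alpha$ preserves nonunits'' is needed on the right but not on the left) matches precisely the paper's remark in the proof of Theorem~\ref{1131}.
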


Recall that a ring $R$ is said to be \emph{reduced} if $R$ has no nonzero nilpotent elements; these rings are a natural generalization of domains. Recently in \cite[Theorem 2.2]{MPQ}, reduced skew power series rings $R[[x; \alpha]]$ which are right (or left) Archimedean were characterized under the additional assumption that the ring $R$ satisfies ACC on annihilators and $\alpha$ is a surjective and rigid endomorphism of $R$.

In this paper we characterize, in full generality (i.e., with no initial conditions on $R$ or $\alpha$), the skew polynomial rings $R[x; \alpha]$ and skew power series rings $R[[x; \alpha]]$ that are reduced and right or left Archimedean. To get the results we use methods that are different from those in \cite{MPQ}.

The paper is organized as follows. In Section \ref{2} we prove some general results on one-sided Archimedean rings. In Section \ref{3} we characterize skew polynomial rings which are reduced and right or left Archimedean. Surprisingly, these rings are necessarily domains, as we will see in Theorems \ref{1130} and \ref{1144}. The main results of Section \ref{4} are Theorems \ref{1133} and \ref{1131} in which we characterize skew power series rings that are reduced and right (resp. left) Archimedean. In the same section we also provide an example of a right and left Archimedean skew power series ring which is reduced but not a domain (see Example \ref{7700}).

\section{Preliminaries} \label{2}

In this short section we collect some properties of Archimedean rings, which will be used in later parts of the paper. We start with an easy observation on passing the right (resp. left) Archimedean condition from a ring to its subrings (see \cite[Proposition 5.1]{RMArchim}; cf. \cite[Proposition 3.5]{MPP}).

\begin{proposition} \label{5500} Let $A$ be a subring of a ring $B$ such that $A \cap U(B) \subseteq U(A)$. If $B$ is right (resp. left) Archimedean, then so is $A$.
\end{proposition}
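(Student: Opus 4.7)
The plan is to reduce the claim about $A$ directly to the Archimedean property of $B$, using the hypothesis $A \cap U(B) \subseteq U(A)$ only to translate ``nonunit in $A$'' into ``nonunit in $B$''.

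First I would fix an arbitrary $a \in A \setminus U(A)$ and argue that $a \in B \setminus U(B)$. Indeed, if $a$ were a unit of $B$, then $a \in A \cap U(B)$, so by hypothesis $a \in U(A)$, contradicting the choice of $a$. Hence $a$ is a nonunit of $B$.

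Next I would invoke the hypothesis that $B$ is right Archimedean to conclude $\bigcap_{n \in \mathbb{N}} Ba^n = 0$. Since $A \subseteq B$, we have $Aa^n \subseteq Ba^n$ for every $n \in \mathbb{N}$, whence
\[
\bigcap_{n \in \mathbb{N}} Aa^n \;\subseteq\; \bigcap_{n \in \mathbb{N}} Ba^n \;=\; 0,
\]
which gives the right Archimedean condition for $A$ at $a$. Since $a$ was arbitrary, $A$ is right Archimedean. The argument for the left case is completely symmetric: replace $Ba^n$ and $Aa^n$ by $a^nB$ and $a^nA$ respectively.

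There is no real obstacle here; the only subtle point is the first step, where one must make sure that a nonunit of $A$ does not accidentally become a unit of $B$, and this is precisely what the assumption $A \cap U(B) \subseteq U(A)$ guarantees. The rest is inclusion of one-sided ideals.
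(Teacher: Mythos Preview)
Your argument is correct and is exactly the natural one: the hypothesis $A \cap U(B) \subseteq U(A)$ ensures nonunits of $A$ remain nonunits of $B$, and then the inclusion $Aa^n \subseteq Ba^n$ does the rest. The paper itself does not prove this proposition but simply records it as an easy observation with references to \cite{RMArchim} and \cite{MPP}; your proof is the expected one.
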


An element $a$ of a ring $R$ is called a \emph{right} (resp. \emph{left}) \emph{zero-divisor} of $R$ if there exists $b \in R \setminus \{ 0 \}$ such that $ba = 0$ (resp. $ab = 0$). Recall that a ring $R$ is said to be \emph{Dedekind-finite} if for any $a, b \in R$, $ab = 1$ implies $ba = 1$.

\begin{lemma} \label{2211}
If a ring $R$ is right (resp. left) Archimedean, then
\begin{enumerate}
\item[(a)] For any $a, b, c\in R$, if $a = bac$ and $a \neq 0$ then $c \in U(R)$ (resp. $b \in U(R)$).
\item[(b)] All right (resp. left) zero-divisors of $R$ belong to $J(R)$.
\item[(c)] $0$ and $1$ are the only idempotents of $R$.
\item[(d)] $R$ is Dedekind-finite.
\end{enumerate}
\end{lemma}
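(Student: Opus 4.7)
The plan is to establish (a) directly from the Archimedean hypothesis by iteration, and then derive (b), (c), and (d) as consequences of (a).

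For (a), I would iterate the relation $a = bac$ to obtain $a = b^n a c^n$ for every $n \geq 1$. In the right-Archimedean case this exhibits $a$ as an element of $Rc^n$ for every $n$, hence of $\bigcap_{n} Rc^n$; if $c$ were a nonunit this intersection would be zero, contradicting $a \neq 0$, so $c \in U(R)$. The left-Archimedean case is symmetric, placing $a$ in $\bigcap_n b^n R$ and forcing $b \in U(R)$.

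For (b), in the right-Archimedean case I would take a right zero-divisor $a$ with witness $b \neq 0$, i.e.\ $ba = 0$. For any $r \in R$ one computes $b(1-ar) = b - bar = b$, so $b = 1 \cdot b \cdot (1-ar)$; applying (a) with $b \neq 0$ gives $1 - ar \in U(R)$ for every $r$. Thus $aR$ is a quasi-regular right ideal, and by the standard characterization of the Jacobson radical this forces $a \in J(R)$. The left case is dual, using $b = (1-ra) \cdot b \cdot 1$ and the left form of (a).

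For (c), any idempotent $e$ admits the trivial factorization $e = e \cdot 1 \cdot e$. If $e \neq 0$, part (a) applied to this factorization forces $e \in U(R)$ (as the $c$-factor in the right case, as the $b$-factor in the left case); combined with $e^2 = e$ this yields $e = 1$. For (d), if $ab = 1$ (and $R \neq 0$, so $a \neq 0$), I would write $a = a(ab) = a \cdot a \cdot b$ and apply (a) right-case to conclude $b \in U(R)$, whence $a = b^{-1}$ and $ba = 1$; in the left-Archimedean case I would use $b = (ab)b = a \cdot b \cdot b$ instead, noting $b \neq 0$ (else $ab = 0 \neq 1$), and apply the left form of (a) to get $a \in U(R)$.

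The computations are all short substitutions; the only external fact invoked is the standard equivalence $a \in J(R) \Leftrightarrow 1 - ar$ is right-invertible for every $r \in R$, used in (b). I do not anticipate any serious obstacle: the whole lemma is essentially a sequence of immediate applications of (a), so the real content lies in the iteration argument of (a) itself.
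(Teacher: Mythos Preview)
Your plan matches the paper's proof almost exactly: part (a) is proved identically by iterating $a=bac$ to get $a=b^nac^n$, and (b)--(d) are then read off as corollaries. Two small remarks. In (c), the factorization $e = e\cdot 1\cdot e$ does not fit the hypothesis of (a), since (a) requires the \emph{middle} factor to equal the left-hand side; write instead $e = 1\cdot e\cdot e$ (right case) or $e = e\cdot e\cdot 1$ (left case), and then (a) gives $e\in U(R)$ as you intend. In (b) your quasi-regularity route ($b(1-ar)=b$, hence $1-ar\in U(R)$ for all $r$) is correct and arguably cleaner than the paper's direct maximal-right-ideal argument, and in (d) you apply (a) directly while the paper passes through (c); both variants are fine.
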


\begin{proof} We consider only the case where $R$ is right Archimedean. The left Archi\-me\-de\-an case follows by analogous arguments.

(a): If $a = bac$, then $a = b(bac)c = b^2ac^2 = b^2(bac)c^2 = b^3ac^3$, and continuing in this way we obtain $a = b^nac^n$ for any $n \in \mathbb{N}$. Hence $a \in \bigcap_{n \in \mathbb{N}} Rc^n$, and since $R$ is right Archimedean and $a \neq 0$, $c \in U(R)$ follows.

(b): Suppose, for contradiction, that there exists a right zero-divisor $c \in R$ such that $c \not\in J(R)$. Then $ac = 0$ for some $a \in R \setminus \{ 0 \}$ and there exists a maximal right ideal $M$ of $R$ such that $c \not\in M$. The maximality of $M$ implies $cR + M = R$ and thus $cr + m = 1$ for some $r \in R$ and $m \in M$. Now, $a = a(cr + m) = (ac)r + am = am$, and since $a \neq 0$, (a) implies $m \in U(R)$, a contradiction.

(c) follows immediately from (a).

(d) is a direct consequence of (c).
\end{proof}

\begin{remark} 
It is an immediate consequence of Lemma \ref{2211}(b) that each semi\-pri\-mi\-tive ring (i.e., a ring $R$ with $J(R) = 0$) which is right or left Archimedean is automatically a domain. From this observation it follows that a von Neumann regular ring $R$ is right or left Archimedean if and only if  $R$ is a division ring.
\end{remark}

\section{Reduced Archimedean skew polynomial rings} \label{3}

In this section we will characterize reduced skew polynomial rings that are right or left Archimedean. We start with the following observation.

\begin{proposition} \label{1987}
Let $R$ be a ring and $\alpha$ an endomorphism of $R$ such that the skew polynomial ring $R[x; \alpha]$ is right (resp. left) Archimedean. If $f$ is a right (resp. left) zero-divisor of $R[x; \alpha]$, then $fx$ is nilpotent.
\end{proposition}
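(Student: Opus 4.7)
The case $f = 0$ is trivial, so I assume $f \neq 0$ and fix some $g \in R[x;\alpha] \setminus \{0\}$ with $gf = 0$. Since $f$ is a right zero-divisor, Lemma~\ref{2211}(b) places $f$ in $J(R[x;\alpha])$; as $J(R[x;\alpha])$ is a two-sided ideal, $fx \in J(R[x;\alpha])$, and hence $1 - fx$ is a unit of $R[x;\alpha]$. Let $u = (1 - fx)^{-1}$, a polynomial of some finite $x$-degree $d$. From $u(1 - fx) = 1 = (1 - fx)u$, a straightforward induction on $n$ yields, for every $n \geq 1$,
\[
u \;=\; 1 + fx + (fx)^2 + \cdots + (fx)^{n-1} + u(fx)^n \;=\; 1 + fx + \cdots + (fx)^{n-1} + (fx)^n u;
\]
in particular $u$ commutes with $fx$, and writing $p_n := 1 + fx + \cdots + (fx)^{n-1}$ we have $u - p_n = u(fx)^n \in R[x;\alpha](fx)^n$ for every $n$.

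Since $fx$ has no constant term it is not a unit of $R[x;\alpha]$, and the right Archimedean hypothesis gives $\bigcap_{n} R[x;\alpha](fx)^n = 0$. The plan is to deduce nilpotency of $fx$ via Lemma~\ref{2211}(a): if the descending chain $R[x;\alpha](fx) \supseteq R[x;\alpha](fx)^2 \supseteq \cdots$ stabilizes at some stage $N$, then $(fx)^N = h(fx)^{N+1} = h(fx)^N \cdot fx$ for some $h \in R[x;\alpha]$, and Lemma~\ref{2211}(a) applied with $a = (fx)^N$, $b = h$, and $c = fx \notin U(R[x;\alpha])$ immediately forces $(fx)^N = 0$, completing the argument.

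The principal obstacle is therefore to show that this chain does stabilize. I plan to exploit the identity $u - p_n = u(fx)^n$ together with the fact that $u$ has fixed $x$-degree $d$: since the product $u(fx)^n$ has $x$-order at least $n$, the polynomials $p_n$ and $u$ must agree in every coefficient of $x$-degree below $n$, so in particular the coefficients of $p_n$ in $x$-degrees $d+1, \ldots, n-1$ must all vanish. I would also embed $R[x;\alpha]$ in $R[[x;\alpha]]$ and realise $u$ as the $x$-adically convergent geometric series $\sum_{n \ge 0}(fx)^n$, converting the polynomial nature of $u$ into vanishing relations among the coefficients of the powers $(fx)^n$; combined with the right Archimedean property inherited by $R$ itself via Proposition~\ref{5500}, these relations should force $(fx)^n = 0$ for all sufficiently large $n$, yielding the required stabilization of the chain.
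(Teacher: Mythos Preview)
Your opening moves coincide with the paper's: Lemma~\ref{2211}(b) puts $f$ (hence $fx$) in $J(R[x;\alpha])$, so $1\pm fx$ is a unit, and its inverse computed in $R[[x;\alpha]]$ is the geometric series $\sum_{k\ge 0}(\mp fx)^k$, which must therefore be a \emph{polynomial}. At this point the paper simply says ``hence $fx$ is nilpotent'' and is done.

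You instead insert a detour through chain stabilization and Lemma~\ref{2211}(a). That detour is logically sound \emph{if} the chain stabilizes, but your argument for stabilization is where the gap lies. In the final paragraph you propose to show directly that $(fx)^n=0$ for all large $n$, invoking ``vanishing relations among the coefficients'' and the Archimedean property of $R$ via Proposition~\ref{5500}. But this is a plan, not a proof: you never exhibit the relations, and it is not clear how the Archimedean property of $R$ (which concerns intersections $\bigcap_n Ra^n$ for single elements $a\in R$) controls the coefficient-sums arising from the powers $(fx)^n$. Moreover, the whole detour is circular: the very statement you are trying to establish at the end, $(fx)^n=0$ for large $n$, \emph{is} nilpotency of $fx$; once you have it, the stabilization--Lemma~\ref{2211}(a) machinery is superfluous.

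The clean route is the paper's: from $u=p_n+(fx)^n u$ and $u$ a unit one gets $(fx)^n=(u-p_n)(1-fx)$, and the fact that the geometric series is a polynomial forces the tails $(fx)^n u=u-p_n$ (hence $(fx)^n$) to vanish for $n$ beyond $\deg u$. No appeal to the Archimedean property of $R$ is needed here; drop that ingredient and the chain--stabilization scaffolding, and argue directly from the polynomial identity.
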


\begin{proof} Assume $R[x; \alpha]$ is right (resp. left) Archimedean and let $f$ be a right (resp. left) zero-divisor of $R[x; \alpha]$. Then $f \in J(R[x; \alpha])$ by Lemma \ref{2211}(b) and thus $1 + fx \in U(R[x; \alpha])$. Since $R[x; \alpha]$ is a subring of the skew power series ring $R[[x; \alpha]]$ and $1 + fx$ is invertible in $R[[x; \alpha]]$ with\begin{equation}\label{9328}
(1 + fx)^{-1} = 1 - fx + (fx)^2 - (fx)^3 + \cdots,
\end{equation}
the power series (\ref{9328}) has to be also the inverse of $1 + fx$ in $R[x; \alpha]$ and thus the power series (\ref{9328}) is, in fact, a polynomial. Hence $fx$ is nilpotent.
\end{proof}

For a ring $R$, let $Z_r(R)$ (resp. $Z_l(R)$) denote the set of right (resp. left) zero-divisors of $R$, and let $N(R)$ denote the set of nilpotent elements of $R$.

\begin{corollary} \label{8104}
Let $R$ be a ring such that the polynomial ring $A = R[x]$ is right (resp. left) Archimedean. Then $Z_r(A) = N(A) \subseteq J(A)$ (resp. $Z_l(A) = N(A) \subseteq J(A)$).
\end{corollary}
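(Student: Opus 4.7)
The plan is to apply Proposition \ref{1987} with $\alpha$ equal to the identity endomorphism of $R$, and then exploit the fact that in the \emph{ordinary} polynomial ring $A = R[x]$ the variable $x$ commutes with every coefficient. I will handle the right-Archimedean case; the left case is completely symmetric.

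First, from Lemma \ref{2211}(b) we get $Z_r(A) \subseteq J(A)$ immediately (and this is the only place the Archimedean hypothesis is needed beyond invoking Proposition \ref{1987}). Second, the inclusion $N(A) \subseteq Z_r(A)$ holds in every (nonzero) ring: if $f \in N(A)$, choose the minimal $n \geq 1$ with $f^n = 0$; then either $f = 0$ (in which case $1 \cdot f = 0$ witnesses $f \in Z_r(A)$) or $f^{n-1} \neq 0$ and $f^{n-1} \cdot f = 0$. So the real content is the reverse inclusion $Z_r(A) \subseteq N(A)$.

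For that, let $f \in Z_r(A)$. Proposition \ref{1987} applied to the identity endomorphism yields that $fx$ is nilpotent in $A$, say $(fx)^n = 0$. Because $\alpha = \mathrm{id}$, the variable $x$ commutes with every element of $R$, hence with $f$; consequently $(fx)^n = f^n x^n$. Comparing coefficients in the untwisted polynomial ring $R[x]$, the equality $f^n x^n = 0$ forces $f^n = 0$, so $f \in N(A)$. This gives $Z_r(A) = N(A)$, and combined with the first paragraph we conclude $Z_r(A) = N(A) \subseteq J(A)$.

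There is essentially no obstacle beyond recognizing that $R[x]$ is the special case $\alpha = \mathrm{id}$ of $R[x;\alpha]$, so that Proposition \ref{1987} is directly applicable, and that commutativity of $x$ with coefficients turns the conclusion ``$fx$ nilpotent'' into the sharper ``$f$ nilpotent.'' (In the genuinely skew case one only gets nilpotence of $fx$, which is why the corollary is stated for $R[x]$ rather than for $R[x;\alpha]$.)
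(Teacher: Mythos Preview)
Your proof is correct and follows essentially the same approach as the paper: invoke Proposition \ref{1987} (with $\alpha = \mathrm{id}$) to get $Z_r(A) \subseteq N(A)$, note the reverse inclusion is obvious, and apply Lemma \ref{2211}(b) for the containment in $J(A)$. You simply spell out two details the paper leaves implicit, namely why $N(A) \subseteq Z_r(A)$ and why nilpotence of $fx$ forces nilpotence of $f$ when $x$ is central.
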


\begin{proof} Assume $R$ is right Archimedean. The containment $Z_r(A) \subseteq N(A)$ follows from Proposition \ref{1987}, and since the opposite containment is obvious, we obtain $Z_r(A) = N(A)$. Now applying Lemma \ref{2211}(b) completes the proof of the right Archimedean case. The left Archimedean case follows by an analogous argument.
\end{proof}

In the following two theorems we characterize reduced right (resp. left) Archime\-de\-an skew polynomial rings.

\begin{theorem} \label{1130}
Let $R$ be a ring and $\alpha$ an endomorphism of $R$. Then the following conditions are equivalent:
\begin{enumerate}
\item[(i)] $R[x; \alpha]$ is a reduced right Archimedean ring.
\item[(ii)] $R[x; \alpha]$ is a right Archimedean domain.
\item[(iii)] $R$ is a right Archimedean domain, $\alpha$ is injective and $\alpha$ preserves nonunits of~$R$.
\end{enumerate}
\end{theorem}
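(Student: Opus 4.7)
The plan is to reduce everything to the Archimedean-domain case already settled by Nasr-Isfahani. The equivalence (ii)$\Leftrightarrow$(iii) is immediate from Theorem \ref{2721}(a), and (ii)$\Rightarrow$(i) is trivial since every domain is reduced. Hence the only substantive implication is (i)$\Rightarrow$(ii): assuming $R[x;\alpha]$ is reduced and right Archimedean, I must show it has no nonzero zero-divisors.

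To establish (i)$\Rightarrow$(ii), I would argue as follows. Let $f$ be any right zero-divisor of $R[x;\alpha]$. Proposition \ref{1987} applies and tells me that $fx$ is nilpotent. Because $R[x;\alpha]$ is reduced, this forces $fx = 0$. Writing $f = a_0 + a_1 x + \cdots + a_n x^n$ and expanding, one has $fx = a_0 x + a_1 x^2 + \cdots + a_n x^{n+1}$, and since $\{1, x, x^2, \ldots\}$ is a left $R$-basis of $R[x;\alpha]$ by construction of the skew polynomial ring, the equation $fx = 0$ forces every coefficient $a_i$ to vanish. Thus $f = 0$, so $0$ is the only right zero-divisor of $R[x;\alpha]$, which is exactly the statement that $R[x;\alpha]$ is a domain. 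Once (ii) is in hand, (iii) follows from Theorem \ref{2721}(a).

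The heavy lifting has already been done by Proposition \ref{1987}; the remaining work is essentially bookkeeping. The one step worth singling out is the passage from $fx = 0$ to $f = 0$, which relies on the fact that $R[x;\alpha]$ is a free left $R$-module on the standard monomial basis. This is a built-in feature of the skew polynomial construction (multiplication by $x$ on the right simply shifts coefficients, without any interference from $\alpha$) and not a delicate point. I do not foresee any real obstacle in executing the plan.
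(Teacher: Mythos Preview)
Your proposal is correct and follows essentially the same route as the paper: use Proposition~\ref{1987} to get that $fx$ is nilpotent, invoke reducedness to conclude $fx=0$ and hence $f=0$, and then appeal to Theorem~\ref{2721}(a) for the equivalence with (iii). The only difference is that you spell out the coefficient argument for $fx=0\Rightarrow f=0$, which the paper leaves implicit.
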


\begin{proof} (i) $\Leftrightarrow$ (ii): Assume (i) and suppose,  for contradiction, that $R[x; \alpha]$ is not a domain. Then there exists a nonzero right zero-divisor $f \in R[x; \alpha]$. Since $R[x; \alpha]$ is reduced and right Archimedean, Proposition \ref{1987} implies $fx = 0$ and thus $f = 0$, a contradiction. Hence $R[x; \alpha]$ is a right Archimedean domain and the proof of the implication (i) $\Rightarrow$ (ii) is complete. The opposite implication is obvious. 

(ii) $\Leftrightarrow$ (iii): This equivalence follows from Theorem \ref{2721}(a).
\end{proof}

\begin{theorem} \label{1144}
Let $R$ be a ring and $\alpha$ an endomorphism of $R$. Then the following conditions are equivalent:
\begin{enumerate}
\item[(i)] $R[x; \alpha]$ is a reduced left Archimedean ring.
\item[(ii)] $R[x; \alpha]$ is a left Archimedean domain.
\item[(iii)] $R$ is a left Archimedean domain and $\alpha$ is injective.
\end{enumerate}
\end{theorem}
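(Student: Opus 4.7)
The proof plan is to mirror the structure of Theorem \ref{1130} almost verbatim, replacing ``right'' with ``left'' throughout. The implication (ii) $\Rightarrow$ (i) is immediate because every domain is reduced, and the equivalence (ii) $\Leftrightarrow$ (iii) is supplied directly by Theorem \ref{2721}(b). So the only substantive step is (i) $\Rightarrow$ (ii).

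For (i) $\Rightarrow$ (ii), the plan is to argue by contradiction: suppose $R[x; \alpha]$ is reduced and left Archimedean but not a domain. Then there exists a nonzero left zero-divisor $f \in R[x;\alpha]$. The key tool is the left-sided version of Proposition \ref{1987}, which already covers this case and guarantees that $fx$ is nilpotent. Since $R[x; \alpha]$ is assumed reduced, any nilpotent element is $0$, hence $fx = 0$.

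Writing $f = a_0 + a_1 x + \cdots + a_n x^n$, we have $fx = a_0 x + a_1 x^2 + \cdots + a_n x^{n+1}$, and equating coefficients in the free $R$-module $R[x;\alpha]$ forces $a_0 = a_1 = \cdots = a_n = 0$, so $f = 0$, a contradiction. This completes (i) $\Rightarrow$ (ii).

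I do not expect a real obstacle here: Proposition \ref{1987} was stated with both ``right'' and ``left'' versions precisely to accommodate this dual theorem, and the passage from $fx = 0$ back to $f = 0$ is just the observation that right multiplication by the indeterminate $x$ is injective on the underlying additive group of $R[x; \alpha]$. Thus the proof is essentially a symmetric copy of the proof of Theorem \ref{1130}, invoking Theorem \ref{2721}(b) in place of Theorem \ref{2721}(a).
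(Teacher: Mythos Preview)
Your proposal is correct and follows essentially the same approach as the paper: the paper's own proof simply says that (i) $\Leftrightarrow$ (ii) is proved exactly as in Theorem \ref{1130} (via the left version of Proposition \ref{1987}) and that (ii) $\Leftrightarrow$ (iii) follows from Theorem \ref{2721}(b). Your added remark that $fx=0$ forces $f=0$ coefficientwise is just making explicit what the paper leaves implicit.
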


\begin{proof} The equivalence (i) $\Leftrightarrow$ (ii) can be proved similarly as the equivalence (i) $\Leftrightarrow$ (ii) in Theorem \ref{1130}. The equivalence (ii) $\Leftrightarrow$ (iii) follows from Theorem \ref{2721}(b).
\end{proof}

As an immediate consequence of Theorems \ref{1130} and \ref{1144} we obtain the following corollary.

\begin{corollary} \label{1177}
For any ring $R$ the following conditions are equivalent:
\begin{enumerate}
\item[(i)] $R[x]$ is a reduced right (resp. left) Archimedean ring.
\item[(ii)] $R[x]$ is a right (resp. left) Archimedean domain.
\item[(iii)] $R$ is a right (resp. left)  Archimedean domain.\end{enumerate}
\end{corollary}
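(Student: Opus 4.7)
The plan is simply to specialize Theorems \ref{1130} and \ref{1144} to the case where the endomorphism $\alpha$ is the identity $\mathrm{id}_R$, since $R[x] = R[x; \mathrm{id}_R]$. First I would observe that the identity endomorphism is trivially injective and trivially satisfies $\mathrm{id}_R(R \setminus U(R)) \subseteq R \setminus U(R)$, i.e., it preserves nonunits. Hence condition (iii) of Theorem \ref{1130} applied to $\alpha = \mathrm{id}_R$ collapses to the statement ``$R$ is a right Archimedean domain'', which is exactly the right-sided version of condition (iii) here. An identical remark applies to Theorem \ref{1144} in the left-sided case, where only injectivity of $\alpha$ needs to be checked and is again automatic.

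With that observation in hand, the right Archimedean equivalences (i) $\Leftrightarrow$ (ii) $\Leftrightarrow$ (iii) are read off directly from Theorem \ref{1130}, and the left Archimedean ones from Theorem \ref{1144}. No additional argument is required, and there is no substantive obstacle: the corollary is advertised in the text as ``an immediate consequence'' of the two preceding theorems, and the only thing to check is that the extra hypotheses on $\alpha$ in those theorems become vacuous when $\alpha = \mathrm{id}_R$.
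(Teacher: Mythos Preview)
Your proposal is correct and matches the paper's approach exactly: the corollary is stated as an immediate consequence of Theorems \ref{1130} and \ref{1144}, and specializing to $\alpha = \mathrm{id}_R$ with the observation that the identity is injective and preserves nonunits is precisely what is intended.
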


Looking at Corollary \ref{1177} one could conjecture that for a one-sided Archimedean ring, to be reduced is equivalent to be a domain. However, this is not the case, as we will see in Example \ref{1234}.

\section{Reduced Archimedean skew power series rings} \label{4}

This section is devoted to characterizations of skew power series rings that are reduced and right or left Archimedean. In this context the concept of a rigid endomorphism appears naturally, as we can see in Proposition \ref{2860} below. Recall from \cite{Krempa} that an endomorphism $\alpha$ of a ring $R$ is said to be \textit{rigid} if $a\alpha (a)=0$ implies $a = 0$, for any $a \in R$. Note that if $\alpha$ is rigid, then $\alpha$ is injective and $R$ is reduced (see \cite[p. 218]{Hong}).

\begin{proposition} \label{2860} {\rm (Krempa \cite[Corollary 3.5]{Krempa})}  
Let $R$ be a ring and $\alpha$ an endomorphism of $R$. Then the skew power series ring $R[[x; \alpha]]$ is reduced if and only if $\alpha$ is rigid.
 \end{proposition}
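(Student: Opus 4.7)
The forward implication is immediate: if $a \in R$ satisfies $a\alpha(a) = 0$, then $f = ax \in R[[x;\alpha]]$ has $f^{2} = a\alpha(a)x^{2} = 0$, so reducedness of $R[[x;\alpha]]$ forces $f = 0$ and hence $a = 0$. I therefore focus on the converse.

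Assume $\alpha$ is rigid. Since a ring is reduced if and only if it contains no nonzero element of square zero (for $a^{n} = 0$ with $n \geq 3$ and $n$ minimal, one has $(a^{\lceil n/2\rceil})^{2} = a^{2\lceil n/2\rceil} = 0$ while $a^{\lceil n/2\rceil} \neq 0$), it suffices to prove that $f^{2} = 0$ forces $f = 0$ for every $f \in R[[x;\alpha]]$. Write $f = \sum_{i \geq 0} a_{i} x^{i}$, suppose $f \neq 0$, and let $k$ be minimal with $a_{k} \neq 0$. Since all lower coefficients of $f$ vanish, the coefficient of $x^{2k}$ in $f^{2}$ reduces to the single term $a_{k}\alpha^{k}(a_{k})$, giving
\[ a_{k}\,\alpha^{k}(a_{k}) = 0. \]

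The main obstacle is bridging from this equation to $a_{k} = 0$, since rigidity a priori only handles the case $k = 1$. The plan is to first establish the following auxiliary fact about any rigid endomorphism $\alpha$ (which automatically makes $R$ reduced): for all $a, b \in R$, if $a\alpha(b) = 0$, then $ab = 0$. The proof is the one-line computation
\[ (ba)\,\alpha(ba) \;=\; b\bigl(a\alpha(b)\bigr)\alpha(a) \;=\; 0, \]
which by rigidity gives $ba = 0$ and hence $ab = 0$ by reducedness of $R$. Applying this lemma to $a_{k}\alpha^{k}(a_{k}) = a_{k}\,\alpha\bigl(\alpha^{k-1}(a_{k})\bigr) = 0$ yields $a_{k}\alpha^{k-1}(a_{k}) = 0$, and iterating $k$ times lowers the exponent to produce $a_{k}^{\,2} = 0$. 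Since $R$ is reduced, this forces $a_{k} = 0$, contradicting the minimality of $k$ and completing the proof.
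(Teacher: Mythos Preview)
Your argument is correct. The forward direction is the standard one-line check, and for the converse your reduction to the lowest nonzero coefficient together with the ``peeling off $\alpha$'' lemma (from $a\alpha(b)=0$ deduce $(ba)\alpha(ba)=b\bigl(a\alpha(b)\bigr)\alpha(a)=0$, hence $ba=0$ by rigidity and $ab=0$ by reducedness) is exactly the right idea; iterating it brings $a_{k}\alpha^{k}(a_{k})=0$ down to $a_{k}^{2}=0$ and yields the contradiction.

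There is nothing to compare against in the paper itself: Proposition~\ref{2860} is stated with attribution to Krempa \cite[Corollary~3.5]{Krempa} and is quoted without proof. Your write-up supplies a clean self-contained argument in place of that citation. The little lemma you isolate is essentially the ``compatible'' half of Lemma~\ref{compatible} (rigid $\Rightarrow$ compatible), so your proof and the cited literature are in the same circle of ideas; you have simply unpacked the relevant step directly rather than appealing to the named notion.
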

 
 Recall from \cite{Annin} that an endomorphism $\alpha$ of a ring $R$ is said to be \emph{compatible}  if for any $a, b \in R$ we have $ab = 0$ if and only if $a\alpha(b) = 0$.  We will need the following well-known relationship between rigid and compatible endomorphisms (see, e.g., \cite [Lemma 2.2]{Hashemi}).
 
 \begin{lemma} \label{compatible}
Let $R$ be a ring and $\alpha$ an endomorphism of $R$. Then $\alpha$ is rigid if and only if $\alpha$ is compatible and $R$ is reduced.
\end{lemma}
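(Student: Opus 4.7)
The plan is a short two-direction argument, the easy direction first. For the ($\Leftarrow$) direction, assume $\alpha$ is compatible and $R$ is reduced. If $a\alpha(a) = 0$, then compatibility (applied with $b = a$) gives $a \cdot a = 0$, so $a^2 = 0$, and reducedness forces $a = 0$. Hence $\alpha$ is rigid.

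For the ($\Rightarrow$) direction, assume $\alpha$ is rigid. I need to establish two things: that $R$ is reduced, and that $\alpha$ is compatible. To see $R$ is reduced, suppose $a^2 = 0$. Then applying $\alpha$ yields $\alpha(a)^2 = 0$, and hence
\[
(a\alpha(a))\,\alpha(a\alpha(a)) \;=\; a\,\alpha(a)^2\,\alpha^2(a) \;=\; 0.
\]
Rigidity (applied to $a\alpha(a)$) then gives $a\alpha(a) = 0$, and rigidity applied once more yields $a = 0$. So $R$ is reduced; in particular the well-known symmetric property of reduced rings holds: $uv = 0 \iff vu = 0$, since $(vu)^2 = v(uv)u = 0$ forces $vu = 0$.

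Now I prove compatibility. For the forward implication, suppose $ab = 0$. By the symmetric property, $ba = 0$, hence $\alpha(b)\alpha(a) = \alpha(ba) = 0$, and therefore
\[
(a\alpha(b))\,\alpha(a\alpha(b)) \;=\; a\bigl(\alpha(b)\alpha(a)\bigr)\alpha^2(b) \;=\; 0,
\]
so rigidity gives $a\alpha(b) = 0$. For the converse, suppose $a\alpha(b) = 0$. Then
\[
(ba)\,\alpha(ba) \;=\; b\bigl(a\alpha(b)\bigr)\alpha(a) \;=\; 0,
\]
so by rigidity $ba = 0$, and the symmetric property gives $ab = 0$. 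This completes the verification of compatibility.

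The only mildly subtle point is the ``rigidity trick'' of multiplying by $\alpha$ of the same element to squeeze a square-like expression out of a one-sided product; once that device is in hand, both the reducedness of $R$ and the two directions of compatibility fall out immediately, so I do not expect a genuine obstacle here.
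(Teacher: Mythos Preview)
Your proof is correct. Both directions are clean: the $(\Leftarrow)$ direction is immediate from the definitions, and in the $(\Rightarrow)$ direction your ``rigidity trick'' of forming $(c)\alpha(c)$ for a suitable element $c$ is exactly the right device to extract reducedness and then both halves of compatibility.

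As for comparison with the paper: the paper does not actually prove this lemma. It is stated as a well-known fact with a reference to \cite[Lemma 2.2]{Hashemi} and no proof is given. So your argument is a genuine, self-contained addition rather than a paraphrase of anything in the paper. What you gain is independence from the cited reference; what the paper gains by citing is brevity. Your proof is essentially the standard one found in the literature (including the Hashemi--Moussavi reference), so there is no conflict.
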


We will also need the following observation.

\begin{lemma} \label{9000}
Let $R$ be a ring and $\alpha$ a rigid endomorphism of $R$. Then for any posi\-ti\-ve integer $n$, any  permutation $\sigma$ of the set $\{1, \ldots, n \}$, any elements $a_1, \ldots, a_n \in R$ and any nonnegative integers $t_1, \ldots, t_n$ and $k_1, \ldots, k_n$,
$$\alpha^{t_1}(a_{1}^{k_1}) \alpha^{t_2}(a_{2}^{k_2}) \cdots \alpha^{t_n}(a_{n}^{k_n}) = 0 \ \text{\it if and only if } a_{\sigma(1)}a_{\sigma(2)} \cdots a_{\sigma(n)} = 0.$$ 
\end{lemma}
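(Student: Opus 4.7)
By Lemma \ref{compatible}, rigidity of $\alpha$ is equivalent to compatibility of $\alpha$ together with reducedness of $R$, so both tools are available. Reducedness supplies reversibility ($ab=0\iff ba=0$, via $(ba)^2=b(ab)a$), which combined with compatibility yields the stripping principle
\[
\alpha^s(u)\,v=0\iff uv=0\iff u\,\alpha^t(v)=0\qquad (u,v\in R,\ s,t\ge 0).
\]
In addition, I shall use the classical fact that a reduced ring is symmetric: a product $c_1 c_2\cdots c_N$ of elements of $R$ vanishes iff any permuted product $c_{\rho(1)}\cdots c_{\rho(N)}$ does. This rests on semicommutativity, which follows from reducedness via $(arb)^2=ar(ba)rb$.

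The first main step is to strip every $\alpha$ from the left-hand product. Using $\alpha^{t_i}(a_i^{k_i})=\alpha^{t_i}(a_i)^{k_i}$, the left-hand side is a product of $N:=k_1+\cdots+k_n$ elements of $R$ in which $\alpha^{t_i}(a_i)$ appears $k_i$ times. Applying symmetry to bring one copy of $\alpha^{t_i}(a_i)$ to an end of the product, then the stripping principle to replace that copy by $a_i$, and then symmetry again to restore order, I inductively obtain
\[
\alpha^{t_1}(a_1^{k_1})\cdots\alpha^{t_n}(a_n^{k_n})=0\iff a_1^{k_1}a_2^{k_2}\cdots a_n^{k_n}=0.
\]

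The second main step closes the gap between the collapsed power product and the permuted plain product; I read the statement with $k_i\ge 1$, as is required for the claim to make sense. Given $a_1^{k_1}\cdots a_n^{k_n}=0$, set $M=\max_i k_i$, right-multiply by $a_1^{M-k_1}\cdots a_n^{M-k_n}$, and permute via symmetry to reach $(a_1 a_2\cdots a_n)^M=0$; reducedness forces $a_1 a_2\cdots a_n=0$, and symmetry then delivers $a_{\sigma(1)}\cdots a_{\sigma(n)}=0$. For the converse, $a_{\sigma(1)}\cdots a_{\sigma(n)}=0$ yields $a_1\cdots a_n=0$ by symmetry, after which right-multiplying by $a_1^{k_1-1}\cdots a_n^{k_n-1}$ and rearranging recovers $a_1^{k_1}\cdots a_n^{k_n}=0$.

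The main obstacle is the tight interplay between the symmetric-ring permutations and the $\alpha$-stripping: each individual move is elementary, but combining them requires some care to preserve the zero-product condition at every step. Once that bookkeeping is arranged and the classical symmetric property of reduced rings is cited or reproved in the paper, the power-collapse trick using $(a_1\cdots a_n)^M=0 \Rightarrow a_1\cdots a_n=0$ is the last, easy piece.
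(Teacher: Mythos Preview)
Your proposal is correct and follows exactly the same two-step decomposition as the paper: first the equivalence
\[
\alpha^{t_1}(a_1^{k_1})\cdots\alpha^{t_n}(a_n^{k_n})=0 \iff a_1^{k_1}\cdots a_n^{k_n}=0,
\]
and then the equivalence
\[
a_1^{k_1}\cdots a_n^{k_n}=0 \iff a_{\sigma(1)}\cdots a_{\sigma(n)}=0.
\]
The only difference is presentational: the paper dispatches the first step by citing \cite[Lemma~3.1]{Chen} (a general stripping lemma for compatible endomorphisms) and the second by citing \cite[Lemma~1.2]{Krempa} (the permutation/power property of zero products in reduced rings), whereas you reprove both from first principles via the reversibility--compatibility ``stripping principle'' and the $(a_1\cdots a_n)^M$ power-collapse trick. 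Your observation that the claim requires $k_i\ge 1$ is also well taken; the paper's ``nonnegative'' is a slip, but all applications in the paper (e.g.\ in the proof of Theorem~\ref{1133}) use only positive exponents.
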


\begin{proof} It suffices to show that the following two equivalences hold:
%\begin{equation} \label{9186}
$$\alpha^{t_1}(a_{1}^{k_1}) \alpha^{t_2}(a_{2}^{k_2}) \cdots \alpha^{t_n}(a_{n}^{k_n}) = 0 \Leftrightarrow a_{1}^{k_1} a_{2}^{k_2} \cdots a_{n}^{k_n} = 0 \Leftrightarrow a_{\sigma(1)}a_{\sigma(2)} \cdots a_{\sigma(n)} = 0.$$
%\end{equation}

Since $\alpha$ is rigid, $\alpha$ is compatible and $R$ is reduced by Lemma \ref{compatible}. Thus the first of the above equivalences follows from \cite[Lemma 3.1]{Chen}, and the second one is an immediate consequence of \cite[Lemma 1.2]{Krempa}.
\end{proof}

Now we are ready to characterize skew power series rings that are reduced and right Archime\-de\-an.

\begin{theorem} \label{1133}
Let $R$ be a ring and $\alpha$ an endomorphism of $R$. Then the following conditions are equivalent:
\begin{enumerate}
\item[(i)] $R[[x; \alpha]]$ is a reduced right Archimedean ring.
\item[(ii)] $R$ is a right Archimedean ring, $\alpha$ is rigid and $\alpha$ preserves nonunits of~$R$.
\item[(iii)] $R$ is a reduced right Archimedean ring, $\alpha$ is compatible and $\alpha$ preserves non\-units of~$R$.
\end{enumerate}
\end{theorem}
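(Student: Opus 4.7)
The plan is to argue (ii) $\Leftrightarrow$ (iii) immediately from Lemma \ref{compatible}: rigidity of $\alpha$ is equivalent to compatibility of $\alpha$ together with reducedness of $R$, and rigidity by itself already forces $R$ to be reduced. The substantive content is therefore the equivalence (i) $\Leftrightarrow$ (ii).

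For (i) $\Rightarrow$ (ii), rigidity of $\alpha$ follows at once from Proposition \ref{2860}. An easy check (comparing constant terms of $f \cdot f^{-1} = 1$) shows that $R \cap U(R[[x;\alpha]]) \subseteq U(R)$, so Proposition \ref{5500} yields that $R$ is right Archimedean. The delicate point is that $\alpha$ preserves nonunits, which I would prove by contradiction. Suppose $a \in R \setminus U(R)$ but $\alpha(a) \in U(R)$, and set $b := \alpha(a)^{-1}$. Then $(bx)a = b\alpha(a)x = x$, and $x(bx) = \alpha(b)x^2$ combine to give $x^2 = \alpha(b)x^2 a$. Iterating yields $x^2 = \alpha(b)^n x^2 a^n$ for every $n$, so $x^2 \in \bigcap_{n} R[[x;\alpha]]\, a^n$. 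Since $a$ remains a nonunit of $R[[x;\alpha]]$, the right Archimedean assumption forces $x^2 = 0$, and reducedness of $R[[x;\alpha]]$ then gives $x = 0$, the required contradiction.

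For the converse (ii) $\Rightarrow$ (i), reducedness of $R[[x;\alpha]]$ is Proposition \ref{2860}. To verify the right Archimedean property, fix a nonunit $f = \sum_{i \geq 0} f_i x^i$ in $R[[x;\alpha]]$ and $g \in \bigcap_n R[[x;\alpha]]\, f^n$; the aim is $g = 0$. If $f_0 = 0$ then $f = \tilde{f}\, x$, so $f^n = \tilde{f}\,\alpha(\tilde{f})\cdots\alpha^{n-1}(\tilde{f})\, x^n$ and every element of $R[[x;\alpha]]\, f^n$ has order at least $n$, forcing $g = 0$. If $f_0 \neq 0$ then $f_0$ is a nonunit of $R$ and, by hypothesis, so is every iterate $\alpha^k(f_0)$. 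I plan to establish by induction on $k$ the factorization
\[
g = H_n^{(k)}\, \alpha^k(f)^{n-k}\, x^k \qquad \text{for all } n \geq k,
\]
with $H_n^{(0)} := h_n$ (where $g = h_n f^n$) and $H_n^{(k)}$ defined recursively. Reading off the constant term of this identity gives $g_k = (H_n^{(k)})_0\, \alpha^k(f_0)^{n-k}$, and letting $n$ vary places $g_k$ in $\bigcap_{m \geq 1} R\, \alpha^k(f_0)^m$, which is $0$ by the right Archimedean property of $R$ applied to the nonunit $\alpha^k(f_0)$.

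The main obstacle is the induction step. Decomposing $f = f_0 + \tilde{f}\, x$ and $H_n^{(k)} = (H_n^{(k)})_0 + (H_n^{(k)})'\, x$, the equation $g_k = 0$ reads $(H_n^{(k)})_0\, \alpha^k(f_0)^{n-k} = 0$, and Lemma \ref{9000} sharpens this to $(H_n^{(k)})_0\, \alpha^k(f_0) = 0$. This sharper annihilation is exactly what is needed to absorb the constant term of $\alpha^k(f)$ and peel off one further factor of $x$ on the right, producing $H_n^{(k)}\, \alpha^k(f) = H_n^{(k+1)}\, x$ with $H_n^{(k+1)} := (H_n^{(k)})_0\, \alpha^k(\tilde{f}) + (H_n^{(k)})'\, \alpha^{k+1}(f)$ and hence the level-$(k+1)$ factorization. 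It is exactly here that rigidity of $\alpha$ (equivalently, compatibility of $\alpha$ plus reducedness of $R$) is indispensable: without it the equality $(H_n^{(k)})_0\, \alpha^k(f_0)^{n-k} = 0$ would not descend to $(H_n^{(k)})_0\, \alpha^k(f_0) = 0$, and the recursion would break down.
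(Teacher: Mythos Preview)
Your argument is correct. The implications (ii) $\Leftrightarrow$ (iii) and (i) $\Rightarrow$ (ii) follow the paper almost verbatim; the only cosmetic difference is that for ``$\alpha$ preserves nonunits'' the paper applies Lemma~\ref{2211}(a) directly to the identity $x=\alpha(a)^{-1}xa$, whereas you square first and unwind the iteration by hand---a small detour, but not an error.

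The substantive difference lies in (ii) $\Rightarrow$ (i). The paper argues coefficient-wise: it expands $h_n g^n$, observes that the $x^m$-coefficient is a sum of products of the shape $h_j^{(n)}\alpha^{t_1}(g_{i_1})\cdots\alpha^{t_n}(g_{i_n})$ with $j+i_1+\cdots+i_n=m$, and uses the inductive hypothesis $h_j^{(n)}g_0=0$ for $j<m$ together with Lemma~\ref{9000} to annihilate every summand except the one with $j=m$, leaving $f_m=h_m^{(n)}\alpha^m(g_0)^n$. Your route is more structural: you maintain a factorization $g=H_n^{(k)}\,\alpha^k(f)^{\,n-k}x^k$ and, once $g_k=0$ is secured, use Lemma~\ref{9000} to kill the constant term of $H_n^{(k)}\alpha^k(f)$ and thereby peel off one more factor of $x$. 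Both arguments hinge on exactly the same two ingredients---Lemma~\ref{9000} to pass from $(\cdot)\,\alpha^k(f_0)^{\,n-k}=0$ down to $(\cdot)\,\alpha^k(f_0)=0$, and the hypothesis that $\alpha$ preserves nonunits so that $\alpha^k(f_0)$ stays a nonunit---so the content is the same. Your formulation trades the paper's index bookkeeping for a cleaner recursive identity; the paper's version, in turn, makes it a bit more transparent why the ``preserves nonunits'' hypothesis is unnecessary in the left Archimedean analogue (Theorem~\ref{1131}), since there the surviving term is $g_0^{\,n}h_m^{(n)}$ with no $\alpha$ applied to $g_0$.
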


\begin{proof} (i) $\Rightarrow$ (ii): Assume (i). Then $R$ is right Archimedean by Proposition \ref{5500}, and $\alpha$ is rigid by Proposition \ref{2860}. Let $a$ be a nonunit of $R$ and suppose $\alpha(a) \in U(R)$. Since $xa = \alpha(a)x$, we obtain $x = \alpha(a)^{-1}xa$ and thus Lemma \ref{2211}(a) implies $a \in U(R)$, a contradiction. Hence $\alpha$ preserves nonunits of $R$.

(ii) $\Rightarrow$ (i): Denote $A = R[[x; \alpha]]$. Since $\alpha$ is rigid, $A$ is reduced by Proposition \ref{2860}. To prove that $A$ is right Archimedean, let $f = \sum_{i = 0}^{\infty} f_ix^i \in A$ and $g = \sum_{i = 0}^{\infty} g_ix^i \in A$ be such that $f \in \bigcap_{n \in \mathbb{N}} A g^n$. Thus for any $n \in \mathbb{N}$ there exists $h_n = \sum_{i = 0}^{\infty} h_{i}^{(n)} x^i \in A$ such that $f = h_ng^n$. If $g_0 \in U(R)$ then $g \in U(A)$. Hence we assume that $g_0 \not\in U(R)$, and to prove that $A$ is right Archimedean, it suffices to show that $f = 0$. We will prove by induction on $m$ that for every $m \geq 0$ the following condition (\ref{7812}) holds for all $k \leq m$, which obviously implies $f = 0$:
\begin{equation} \label{7812}
f_k = h_{k}^{(n)} g_0 = 0 \ \  \text{\it for any $n > k$}.
\end{equation}

We prove first that (\ref{7812}) holds for $m = 0$. In this case, (\ref{7812}) reduces to 
\begin{equation} \label{1173}
f_0 = h_{0}^{(n)} g_0 = 0 \ \  \text{\it for any $n > 0$}.
\end{equation}
To prove (\ref{1173}), let $n > 0$. Equating constant terms on each side of $f = h_ng^n$ gives
\begin{equation} \label{8877}
f_0 = h_{0}^{(n)}g_{0}^{n},
\end{equation}
and thus $f_0 \in Rg_{0}^n$. Therefore, $f_0 \in \bigcap_{n \in \mathbb{N}} Rg_{0}^{n}$, and since $R$ is right Archimedean and $g_0 \not\in U(R)$, it follows that $f_0 = 0$. Hence by (\ref{8877}) we have $h_{0}^{(n)} g_{0}^{n}= 0$, and thus $h_{0}^{(n)} g_0 = 0$ by Lemma \ref{9000}, which proves (\ref{1173}), so also proves the $m = 0$ case.

Assume that $m \geq 1$ and (\ref{7812}) holds for any $k < m$. To complete the proof, it suffices to show that (\ref{7812}) holds for $k = m$, i.e.,
\begin{equation} \label{0010}
f_{m} = h_{m}^{(n)}g_{m} = 0 \ \  \text{\it for any $n > m$}.
\end{equation}
To prove (\ref{0010}), consider any $n > m$. Since $f = h_ng^n$, $f_{m}$ is the sum of $x^{m}$-coe\-ffi\-cients of all products of monomials
\begin{equation} \label{2185}
h_{j}^{(n)}x^j \cdot  g_{i_1}x^{i_1} \cdot g_{i_2} x^{i_2}  \cdots  g_{i_n}x^{i_n}
\end{equation}
with $j + i_1 + i_2 + \cdots + i_n = m$.
Hence $f_{m}$ is a sum of products of the form
\begin{equation}\label{5441}
h_{j}^{(n)} \alpha^{t_1}(g_{i_1}) \alpha^{t_2}(g_{i_2}) \cdots \alpha^{t_n}(g_{i_n}),
\end{equation}
where $j + i_1 + i_2 + \cdots + i_n = m$ and $t_1, \ldots, t_n$ are some nonnegative integers. Since $n > m$, at least one of the indexes $i_1, i_2, \ldots, i_n$ has to be equal to 0. Hence there exists $p \in \{1, 2, \ldots, n \}$ such that $g_{i_p} = g_0$. If $j < m$, then by the induction hypothesis we have $h_{j}^{(n)}g_p = h_{j}^{(n)}g_0 = 0$, and thus Lemma \ref{9000} implies that the product (\ref{5441}) is equal to 0 in this case. Hence the only case we are left with is that for $j = m$. Then $i_1 = i_2 = \ldots = i_n = 0$ and in this case the product (\ref{2185}) is equal to
\begin{equation} \label{5391}
h_{m}^{(n)}x^{m} g_{0}^{n} = h_{m}^{(n)}\alpha^{m}(g_{0}^{n})x^{m}.
\end{equation}
Hence for any $n > m $ we have
\begin{equation} \label{1930}
f_{m} = h_{m}^{(n)} \alpha^{m}(g_{0}^{n}) = h_{m}^{(n)} \alpha^{m}(g_{0})^{n},
\end{equation}
and thus $f_{m} \in \bigcap_{n \in \mathbb{N}} R(\alpha^{m}(g_{0}))^{n}$. Since $g_0 \not\in U(R)$ and $\alpha$ preserves nonunits, also $\alpha^{m}(g_0) \not\in U(R)$. Hence, since $R$ is right Archimedean, $f_{m} = 0$ follows. Thus, by combining Lemma \ref{9000} with (\ref{1930}), we obtain $h_{m}^{(n)}g_0 = 0$, so (\ref{0010}) holds, which completes the proof of the implication (ii) $\Rightarrow$ (i).

(ii) $\Leftrightarrow$ (iii): This equivalence follows from Lemma \ref{compatible}.
\end{proof}

In the result below we characterize skew power series rings that are reduced left Archimedean.

\begin{theorem} \label{1131}
Let $R$ be a ring and $\alpha$ an endomorphism of $R$. Then the following conditions are equivalent:
\begin{enumerate}
\item[(i)] $R[[x; \alpha]]$ is a reduced left Archimedean ring.
\item[(ii)] $R$ is a left Archimedean ring and $\alpha$ is rigid.
\item[(iii)] $R$ is a reduced left Archimedean ring and $\alpha$ is compatible.
\end{enumerate}
\end{theorem}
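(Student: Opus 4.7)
The plan is to mirror the proof of Theorem \ref{1133} with ``right'' replaced by ``left'' throughout, observing one genuine asymmetry: in the left-sided case there is no need to hypothesize that $\alpha$ preserves nonunits of $R$.

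For (i) $\Rightarrow$ (ii), $R$ is left Archimedean by Proposition \ref{5500} (after the routine constant-coefficient verification that $R \cap U(R[[x;\alpha]]) \subseteq U(R)$), and $\alpha$ is rigid by Proposition \ref{2860}. In Theorem \ref{1133} the analogous implication used $xa = \alpha(a) x$ to force $\alpha$ to preserve nonunits via the right-Archimedean half of Lemma \ref{2211}(a). In the left-Archimedean half, however, that same lemma only asserts that the \emph{left} multiplier $\alpha(a)^{-1}$ is a unit, which is vacuous, so this step simply drops out.

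For (ii) $\Rightarrow$ (i), reducedness of $A = R[[x;\alpha]]$ comes from Proposition \ref{2860}. To show $A$ is left Archimedean, I would take $f,g \in A$ with $f \in \bigcap_n g^n A$, write $f = g^n h_n$ with $h_n = \sum_{i} h_i^{(n)} x^i$, reduce to the case $g_0 \notin U(R)$, and prove by induction on $m$ that $f_m = g_0 h_m^{(n)} = 0$ for all $n > m$. The $m=0$ base case is immediate: the constant coefficient of $g^n h_n$ is $g_0^n h_0^{(n)}$, so $f_0 \in \bigcap_n g_0^n R = 0$, and Lemma \ref{9000} converts $g_0^n h_0^{(n)} = 0$ into $g_0 h_0^{(n)} = 0$.

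The inductive step is where the main care lies. The $x^m$-coefficient of $g^n h_n$ unfolds as a sum over $i_1 + \cdots + i_n + j = m$ of products
\[
g_{i_1}\alpha^{i_1}(g_{i_2}) \cdots \alpha^{i_1 + \cdots + i_{n-1}}(g_{i_n})\, \alpha^{i_1 + \cdots + i_n}(h_j^{(n)}).
\]
For $n > m$ every such term has some $i_p = 0$, so $g_0$ occurs as a factor; and if $j < m$ then the induction hypothesis $g_0 h_j^{(n)} = 0$, combined with Lemma \ref{9000} (which lets one discard the $\alpha$-iterates and freely reorder the factors), kills the term. Only the $j = m$, all-$i_s = 0$ summand $g_0^n h_m^{(n)}$ survives, so $f_m \in g_0^n R$ for every $n > m$. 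This is precisely where we diverge from Theorem \ref{1133}: the right-sided proof instead lands in $R(\alpha^m(g_0))^n$ and must invoke ``$\alpha$ preserves nonunits'' to guarantee $\alpha^m(g_0) \notin U(R)$. Here $g_0$ itself is already a nonunit of the left-Archimedean ring $R$, so $f_m = 0$ at once, and a last application of Lemma \ref{9000} yields $g_0 h_m^{(n)} = 0$, closing the induction. The equivalence (ii) $\Leftrightarrow$ (iii) is immediate from Lemma \ref{compatible}.
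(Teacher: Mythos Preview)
Your proof is correct and follows essentially the same approach as the paper, which simply says the argument mirrors that of Theorem \ref{1133} with the single change that the surviving summand becomes $g_0^n h_m^{(n)}$ rather than $h_m^{(n)}\alpha^m(g_0)^n$, so the ``$\alpha$ preserves nonunits'' hypothesis is unnecessary. You have in fact spelled out the details (the explicit monomial expansion of $g^n h_n$ and the use of Lemma \ref{9000}) more fully than the paper does.
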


\begin{proof} The proof is similar to that of Theorem \ref{1133}.   In the left Archimedean case, instead of  (\ref{5391}) we obtain the monomial $g_{0}^{n}h_{m}^{(n)}x^{m}$ and thus (\ref{1930}) changes to $f_{m} = g_{0}^{n}h_{m}^{(n)}$. Hence in this case, to follow the argument of the proof of Theorem \ref{1133}, there is no need to assume that $\alpha$ preserves nonunits of $R$.
\end{proof}

As an immediate consequence of Theorems \ref{1133} and \ref{1131} we obtain the following corollary.

\begin{corollary}  \label{5100}
For any ring $R$ the following conditions are equivalent:
\begin{enumerate}
\item[(i)] $R[[x]]$ is a reduced right (resp. left) Archimedean ring.
\item[(ii)] $R$ is a reduced right (resp. left)  Archimedean ring.\end{enumerate}
\end{corollary}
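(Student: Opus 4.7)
The plan is to deduce Corollary \ref{5100} from Theorems \ref{1133} and \ref{1131} by specializing $\alpha$ to the identity endomorphism $\mathrm{id}_R$. The ordinary power series ring $R[[x]]$ is exactly the skew power series ring $R[[x; \mathrm{id}_R]]$, so condition (i) of the corollary is precisely condition (i) of Theorem \ref{1133} (in the right Archimedean case) and of Theorem \ref{1131} (in the left Archimedean case).

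Next I would check that for $\alpha = \mathrm{id}_R$ the extra structural hypotheses appearing in condition (iii) of those theorems are automatic. Compatibility of $\mathrm{id}_R$ is obvious, since the equations $ab = 0$ and $a \cdot \mathrm{id}_R(b) = 0$ are identical. Likewise, $\mathrm{id}_R(R \setminus U(R)) = R \setminus U(R)$, so $\mathrm{id}_R$ trivially preserves nonunits. Therefore condition (iii) of Theorem \ref{1133} collapses to ``$R$ is a reduced right Archimedean ring,'' and condition (iii) of Theorem \ref{1131} collapses to ``$R$ is a reduced left Archimedean ring,'' matching condition (ii) of Corollary \ref{5100} in each case.

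Putting these two observations together gives (i) $\Leftrightarrow$ (ii) in both the right and left Archimedean versions of Corollary \ref{5100}. There is no real obstacle: the proof is pure bookkeeping, and the only thing worth noting explicitly is why the identity endomorphism is compatible and preserves nonunits. (If one preferred to argue via condition (ii) rather than (iii) of Theorems \ref{1133} and \ref{1131}, one would additionally use the standard fact that $\mathrm{id}_R$ is rigid if and only if $R$ has no nonzero square-zero element, which is equivalent to $R$ being reduced; but the route through condition (iii) is cleaner.)
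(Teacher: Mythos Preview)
Your proof is correct and matches the paper's approach exactly: the paper states Corollary \ref{5100} as an immediate consequence of Theorems \ref{1133} and \ref{1131}, and you have simply spelled out why specializing to $\alpha = \mathrm{id}_R$ makes the compatibility and nonunit-preserving conditions automatic.
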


As promised, we will present an example of a reduced  Archimedean ring which is not a domain. Our example will be based on the following observation.

\begin{proposition} \label{2381}
Let $R$ be a ring and $I_1, I_2$ ideals of $R$.
\begin{enumerate}
\item[(a)] If the rings $R/I_1$ and $R/I_2$ are reduced, then so is the ring $R/(I_1 \cap I_2)$.
\item[(b)] If the ideals $I_1$ and $I_2$ are incomparable (i.e., $I_1 \not\subseteq I_2$ and $I_2 \not\subseteq I_1$), then the ring $R/(I_1 \cap I_2)$ is not a domain.
\item[(c)] If $I_1, I_2 \subseteq J(R)$ and both the rings $R/I_1$ and $R/I_2$ are right (resp. left) Archimedean, then the ring $R/(I_1 \cap I_2)$ is right (resp. left) Archimedean.
\end{enumerate}
\end{proposition}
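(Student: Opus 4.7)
My plan is to prove the three parts separately, with parts (a) and (b) being essentially formal, and part (c) requiring a careful analysis of the unit structure using the hypothesis $I_1, I_2 \subseteq J(R)$.

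For part (a), I would take a nilpotent element $a + (I_1 \cap I_2)$ in $R/(I_1 \cap I_2)$, so $a^n \in I_1 \cap I_2$ for some $n$. Then $a + I_k$ is nilpotent in $R/I_k$ for each $k = 1, 2$, and hence $a \in I_k$ by the reducedness hypothesis. Therefore $a \in I_1 \cap I_2$. For part (b), I would pick witnesses $a \in I_1 \setminus I_2$ and $b \in I_2 \setminus I_1$. Then $ab$ lies in both $I_1$ (because $a \in I_1$) and $I_2$ (because $b \in I_2$), so $ab \in I_1 \cap I_2$, while neither $a$ nor $b$ does; this gives two nonzero cosets whose product vanishes in $R/(I_1 \cap I_2)$.

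For part (c), write $I = I_1 \cap I_2$. The key preliminary observation is the following unit-lifting fact: \emph{for any $a \in R$, the coset $a + I$ is a unit in $R/I$ if and only if $a$ is a unit in $R$ if and only if $a + I_k$ is a unit in $R/I_k$ for each $k = 1,2$}. The forward direction in each equivalence is immediate. For the reverse, if $a + I_k$ is a unit in $R/I_k$, then there exists $d \in R$ with $ad - 1, da - 1 \in I_k \subseteq J(R)$, so $ad$ and $da$ are both units in $R$, which forces $a$ itself to be left and right invertible, hence a unit in $R$. This uses $I_k \subseteq J(R)$ essentially.

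With this in hand, the Archimedean argument is short. Suppose $\bar{a} = a + I$ is a nonunit in $R/I$; by the observation above, $a + I_k$ is a nonunit in $R/I_k$ for each $k$. Take any $\bar{b} = b + I$ in $\bigcap_{n \in \mathbb{N}} (R/I) \bar{a}^n$, so for each $n$ there is $c_n \in R$ with $b - c_n a^n \in I \subseteq I_k$. Then $b + I_k \in \bigcap_{n \in \mathbb{N}} (R/I_k)(a + I_k)^n$, which vanishes by the right Archimedean hypothesis on $R/I_k$; hence $b \in I_k$ for both $k$, so $b \in I$ and $\bar{b} = 0$. The left Archimedean case is symmetric. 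The main obstacle I anticipate is simply making sure the unit-lifting step is pinned down correctly, since this is where the $I_k \subseteq J(R)$ hypothesis gets used and without it the nonunit hypothesis would not transfer to the factor rings.
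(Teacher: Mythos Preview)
Your proposal is correct and follows essentially the same approach as the paper. Parts (a) and (b) are identical to the paper's arguments; for part (c), the paper runs the contrapositive (assume $\overline{b}\neq 0$, say $b\notin I_1$, use the Archimedean hypothesis on $R/I_1$ to force $a+I_1\in U(R/I_1)$, then lift to $a\in U(R)$ via $I_1\subseteq J(R)$), whereas you first isolate the unit-lifting fact and then argue directly that a nonunit $\overline{a}$ yields $\bigcap_n (R/I)\overline{a}^n=0$ by passing to both quotients---but the essential content, namely that $I_k\subseteq J(R)$ lets one lift units from $R/I_k$ back to $R$, is the same in both.
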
 

\begin{proof} For any $r \in R$, let $\overline{r}$ denote the coset $r + I_1 \cap I_2$ in the factor ring $\overline{R} = R/(I_1 \cap I_2)$. 

(a): Let $a \in R$ be such that $\overline{a}^2 = \overline{0}$. Then $a ^2 \in I_1$ and $a^2 \in I_2$, and since $R/I_1$ and $R/I_2$ are reduced, it follows that $a \in I_1 \cap I_2$, i.e., $\overline{a} = \overline{0}$. Hence $\overline{R}$ is reduced.

(b): Since $I_1, I_2$ are incomparable, there exist $a \in I_1 \setminus I_2$ and $b \in I_2 \setminus I_1$. Then $ab \in I_1 \cap I_2$, so we have $\overline{a} \neq \overline{0}$, $\overline{b} \neq \overline{0}$ and $\overline{a} \overline{b} = \overline{0}$. Hence $\overline{R}$ is not a domain.

(c): We consider only the right Archimedean case, since the left Archimedean case follows by similar arguments.

To prove that $\overline{R}$ is right Archimedean, let $b, a \in R$ be such that $\overline{b} \neq \overline{0}$ and $\overline{b} \in \bigcap_{n \in \mathbb{N}} \overline{R} \overline{a}^n$, i.e., $b \not\in I_1 \cap I_2$ and for any $n \in \mathbb{N}$ there exists $c_n \in R$ such that $b - c_na^n \in I_1 \cap I_2$.  Without loss of generality we can assume that $b \not\in I_1$. Then, since $b - c_na^n \in I_1$ and $R/I_1$ is right Archimedean, the coset $a + I_1$ is a unit of $R/I_1$, i.e., there exists $v \in R$ such that $av - 1 \in I_1$ and $va - 1 \in I_1$. Hence $av, va \in 1 + I_1$, and since $I_1 \subseteq J(R)$, we obtain $av, va \in U(R)$, which implies $a \in U(R)$. Thus in the ring $\overline{R}$ we have $\overline{a} \in U(\overline{R})$, which shows that $\overline{R}$ is right Archimedean. 
\end{proof}

\begin{example} \label{1234} ({\it There exists a commutative reduced Archimedean ring which is not a domain.}) 
Let $F$ be a field and $R = F[[x, y]]$ the ring of power series in the variables $x, y$ over $F$. Let $I_1$ (resp. $I_2$) be the ideal of $R$ generated by $x$ (resp. $y$). Then $R/I_1 \simeq F[[y]]$ and $R/I_2 \simeq F[[x]]$ and thus by Corollary \ref{5100}  both the rings $R/I_1$ and $R/I_2$ are reduced Archimedean. Since the Jacobson radical of $R$ consists of those power series whose constant term is zero, $I_1 \subseteq J(R)$ and $I_2 \subseteq J(R)$, and obviously the ideals $I_1, I_2$ are incomparable. Hence by Proposition \ref{2381} the factor ring $R/(I_1 \cap I_2)$ is a commutative reduced Archimedean ring which is not a domain.
\end{example}

We close the paper with an example of a reduced right and left Archimedean skew power series ring which is not a domain.

\begin{example} \label{7700} ({\it There exists a reduced skew power series ring which is left and right Archimedean, but neither a domain nor a commutative ring}). Let $R$ be the ring from Example~\ref{1234}. Since $R$ is the factor ring of $F[[x, y]]$ modulo the ideal generated by the product $xy$, we can consider the ring $R$ to be the set of elements uniquely written in the form $a + B(x) + C(y)$, where $a \in F$, $B(x) \in xF[[x]]$ and $C(y) \in yF[[y]]$, with usual addition and with multiplication subject to the relation $xy = 0$. We define a map $\alpha: R \rightarrow R$ by setting $\alpha(a + B(x) + C(y)) = a + B(x^2) + C(y)$. One can easily verify that $\alpha$ is a rigid  endomorphism of $R$ preserving nonunits of $R$. Hence by Theorems \ref{1133} and \ref{1131}, the skew power series ring $R[[t; \alpha]]$ is a reduced ring which is left and right Archimedean, but neither a domain nor a commutative ring.
\end{example}

\section*{Acknowledgments}

This work was supported by the grant WZ/WI/1/2019 from the Bialystok University of Technology and funded from the resources for research by the Ministry of Science and Higher Education of Poland.

\end{document}